\documentclass[11pt]{article}

\usepackage[utf8]{inputenc}
\usepackage{amsmath,amssymb,amsthm}
\usepackage{graphicx}
\usepackage{float}
\usepackage{geometry}
\usepackage{tikz-cd}
\usepackage{url}
\newtheorem{theorem}{Theorem}
\newtheorem{proposition}[theorem]{Proposition}
\newtheorem{example}[theorem]{Example}
\newtheorem{remark}[theorem]{Remark}
\newtheorem{definition}[theorem]{Definition}

\title{Motive Theory Hidden\\ in\\ Karajī--Pascal\footnote{Karajī--Pingala--Xian--Khayyam--Tartaglia--Pascal and ..., cf. section \ref{SectHistory} and Remark \ref{RemarkHistoricalFigures}.} Triangle}
\author{Somayeh Habibi\footnote{School of Mathematics, Institute for Research in Fundamental Sciences (IPM)}}
\date{\today}

\begin{document}

\maketitle

\noindent
\textit{``For in and out, above, about, below,
\\'Tis nothing but a Magic Shadow-show\\Play'd in a Box whose Candle is the Sun,\\Round which we Phantom Figures come and go."\\ ~~~~~~~~Omar Khayyam (1048–1131) }

\section*{Abstract}

These lecture notes are intended as an accessible introduction to some basic ideas of motive theory for readers with limited background in algebraic geometry. Mathematics often reveals unexpected connections between seemingly distant areas. A simple combinatorial identity may encode geometric structures, arithmetic information, or even sophisticated categorical phenomena.

In these notes, we trace a path from elementary counting arguments to Voevodsky's theory of motives. We show how some classical combinatorial identities emerge naturally from geometry, and how motivic decompositions reveal the deeper geometric and arithmetic structures underlying them. Our guiding example is provided by the Karajī--Pascal identity and its $q$-analogue, which link combinatorics and algebraic geometry.

Thus our primary aim of these lecture notes is to demonstrate that some familiar combinatorial identities can provide non-experts with an entry point to some of the basic ideas of motive theory!  Moreover, while introducing the reader to the subject, we also hope to encourage the view that even an elementary mathematical formula may encode a deeper underlying geometric and arithmetic structure. Along the way, the notes offer a gentle introduction to motives through a concrete example rather than through the full technical machinery of modern motive theory.

\tableofcontents

\section{Karajī-Pascal Triangle}\label{SectHistory}

The binomial coefficients are the numbers $b_{r,k}$ that appear in the binomial expansion

\[
(x+y)^r
=
\sum_{k=0}^r
b_{r,k}x^ky^{r-k},
\]
and the binomial theorem asserts that \(b_{r,k}=\binom{r}{k},\) where

\[
\binom{r}{k} = \frac{r!}{k!(r-k)!}.
\]

The known history of binomial theorem goes back to  Euclid (4th century B.C.) where he mentions the formula for $(x+y)^2$. In the 3-rd century
B.C. the Indian lyricist Piṅgala describes a method of arranging two types of syllables to form metres of various lengths and counting them. 
The 10th century Persian mathematician Karajī (953--1029) has written a treatise containing a triangular arrangement of binomial coefficients

\[
\begin{array}{ccccccccc}
&&&&1&&&&\\[4pt]
&&&1&&1&&&\\[4pt]
&&1&&2&&1&&\\[4pt]
&1&&3&&3&&1&\\[4pt]
1&&4&&6&&4&&1
\end{array}
\]
where each entry is obtained recursively as the sum of the two entries immediately above it:
\[
b_{r,k}=b_{r-1,k}+b_{r-1,k-1}.
\]
with boundary values are taken to be \(b_{r,0}=b_{r,r}=1.\)

In addition, he represented the earliest systematic appearance of the mathematical induction, see \cite{Rashed}. Although the original work has not survived, later mathematicians preserved his method. An explicit account appears in the work \emph{al-Bāhir} by al-Samaw'al (12th century), where the results are directly attributed to Karajī. 

\begin{figure}[H]
\centering
\includegraphics[width=0.411\textwidth]{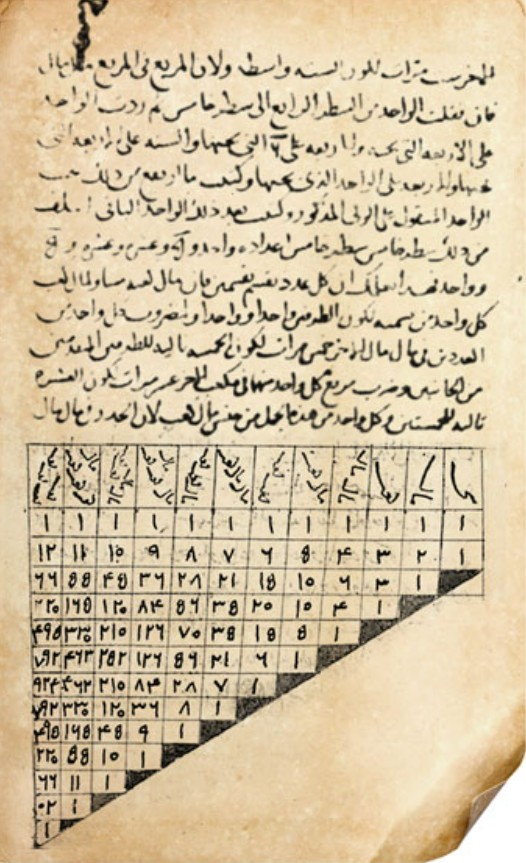}
\caption{Binomial Coefficients, ''al-Bahir fi'l-jabr" by l-Samaw'al (c. 1130 – c. 1180), cf. \cite[page 111]{Ah-Ra}}
\end{figure}

After realizing that the binomial coefficients are the numbers
\[
\binom{r}{k}
=
\frac{r!}{k!(r-k)!},
\qquad 0\le k\le r,
\]
which count the number of ways to choose \(k\) elements from an \(r\)-element set, the above recursive rule can also be rewritten as
\begin{equation}\label{EqKayyam-Pascal}
\binom{r}{k}
=
\binom{r-1}{k-1}
+
\binom{r-1}{k}.
\end{equation}

Karajī's presentation was later reproduced and developed by Omar Khayyám (1048–1131), and the triangle is therefore also known as Khayyám's triangle, who employed in his method for extracting roots of polynomials of degree higher than two.\\

Related developments appeared notably independently in Chinese mathematics. During the thirteenth century, Yang Hui and Chu Shih-Chieh described binomial expansions and triangular coefficient arrays in their mathematical works. Yang Hui explicitly credited these methods to the earlier mathematician Jia Xian from the eleventh century, whose original writings are likewise no longer extant. We refer the reader to \cite{Ah-Ra}, see also \cite{Goss}.

\begin{figure}[H]
\centering
\includegraphics[width=0.45\textwidth]{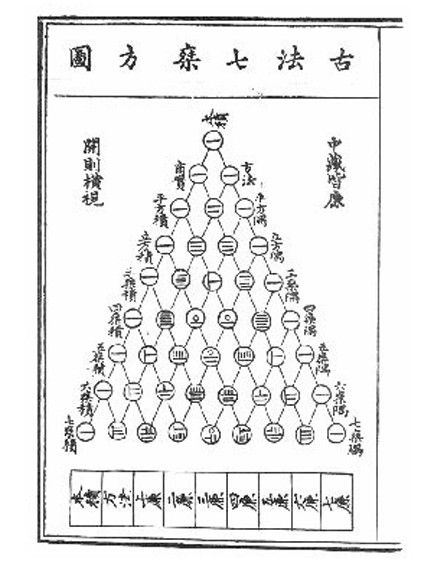}
\caption{Binomial Coefficients from Chu Shih-Chieh's book}
\end{figure}

In Europe, the triangle first appeared in the Arithmetic of Jordanus de Nemore during the thirteenth century.
In the following centuries, Gersonides computed binomial coefficients using their multiplicative formula,
while Petrus Apianus published a variation of triangle in 1527. Michael Stifel reproduced a substantial portion of it in 1544, and Niccolò Tartaglia published six rows in 1556, leading to its Italian name, Tartaglia's triangle.  Gerolamo Cardano subsequently described both the additive and multiplicative rules for constructing the triangle.
\begin{remark}
    Although in the remaining sections of this note we call the triangle and the equation \ref{EqKayyam-Pascal},  "Karajī-Pascal", but nevertheless one should notice the contribution of all other important figures who has been contributed to this topic. (Note that the triangle and the equation \ref{EqKayyam-Pascal} is now widely known as Pascal identity) 
\end{remark}

\begin{remark}\label{RemarkHistoricalFigures}
Although throughout the remainder of this note we refer to the triangle and identity \eqref{EqKayyam-Pascal} as the \emph{Karajī--Pascal} triangle and the \emph{Karajī--Pascal} identity, as it is clear from the title, we wish to acknowledge the significant contributions of many other mathematicians made over the course of several centuries.
\end{remark}

\bigskip

\section{Lattice Paths and the $q$-Analogue}\label{SectLatticePaths}
The identity (\ref{EqKayyam-Pascal}) admits a simple combinatorial interpretation. To choose an $m$-element subset from a set of size $m+n$ (Note that we made a change of variables: $m:=k$ and $n:=r-k$), fix one distinguished element. Any chosen subset either contains this element or does not contain it. If the distinguished element is absent, we choose all $m$ elements from the remaining $m+n-1$ elements. If the distinguished element is present, we only need to choose $m-1$ additional elements. Adding the two possibilities gives Karajī--Pascal's identity. 

Consider the integer lattice
\[
\mathbb Z^2=\{(i,j): i,j\in\mathbb Z\}.
\]
We study lattice paths from \((0,0)\) to \((m,n)\) consisting only of
\emph{east} steps \((1,0)\) and \emph{north} steps \((0,1)\). Such a path
can be represented by a word containing \(m\) letters \(E\) (each
corresponding to one east step) and \(n\) letters \(N\) (each corresponding
to one north step). Conversely, every such word determines a unique lattice
path. Therefore, the total number of these paths is
\(
\binom{m+n}{m}.
\)

\begin{figure}[H]
\centering
\includegraphics[width=0.45\textwidth]{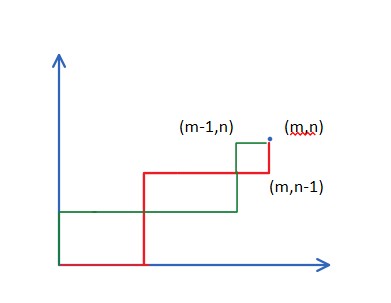}
\caption{each path
can be represented by a word containing \(m\) letters \(E\) and \(n\) letters \(N\)}
\end{figure}

One can reprove Karajī--Pascal's identity by separating these paths into two disjoint classes. Namely, every path arriving at \((m,n)\) must make its final step either \[(m-1,n)\to (m,n)\] or \[(m,n-1)\to (m,n).\]





Now, let us assign a weight to each path. If $A_p$ denotes the area above the path $p$ (and under the line $y=n$), define the weight \(q^{A_p}.\)

Summing over all paths produces a polynomial:

\[
Q(m,n)=\sum_p q^{A_p}.
\]

This weighted counting satisfies the recursion

\[
Q(m,n)=Q(m-1,n)+q^mQ(m,n-1).
\]

If we instead weight the paths according to the area under the paths (weight path $p$ with $q^{U_p}$, where $U_p$ denotes the area under $p$), we get the following alternative recurrence relation

\[
F(m,n)=
F(m,n-1)
+
q^{n}F(m-1,n).
\]

Note that there is the following duality between paths:

\paragraph{Duality between the paths}\label{Duality between the paths}

Let \(p\) be a path from origin to $(m,n)$. Assume it is represented by the word:
\[
w=w_1w_2\cdots w_{m+n},
\qquad
w_i\in\{E,N\}.
\]

Define the dual path \(p^\ast\) by reversing the word:
\[
w^\ast
=
w_{m+n}w_{m+n-1}\cdots w_1.
\]

Note that the area above a path is equal to the area under its dual path. According to this duality phenomena between the paths, we can argue that $Q(m,n)=F(m,n)$.  The resulting polynomial is the Gaussian binomial coefficient, also called the $q$-binomial coefficient:

\[
\genfrac{[}{]}{0pt}{}{m+n}{m}_q.
\]

The $q$-analogue replaces ordinary integers and factorials with their $q$-versions:

\[
[n]_q=\frac{q^n-1}{q-1},
\]

\[
[n]_q!=[n]_q[n-1]_q\cdots[1]_q,
\]

and

\[
\genfrac{[}{]}{0pt}{}{m+n}{m}_q
=
\frac{[m+n]_q!}{[m]_q![n]_q!}.
\]

Note that in the limit $q \to 1$, this recovers the ordinary binomial coefficient.

\bigskip

\noindent
Let's now DO algebraic Geometry! Below we will see how these polynomials interpolate between combinatorics and geometry. Before doing so, however, we first move toward:

\bigskip
\section{Zeta Function}

\subsection{Dynamical Systems and Zeta Functions}


Let $f:X\to X$ be a dynamical system on a topological space $X$. For every positive integer $n$, define

\[
\operatorname{Fix}(f^n)=\{x\in X:f^n(x)=x\}.
\]

Assuming the number of fixed points is finite for every iterate, the Artin--Mazur zeta function is defined by

\[
\zeta(f,t)
=
\exp\left(
\sum_{n=1}^{\infty}
\frac{\#\operatorname{Fix}(f^n)}{n}t^n
\right).
\]

This generating function packages the periodic orbit structure of the dynamical system, see \cite{Ar-Ma}.

\begin{example}\label{Example:The Circle Map} (The Circle Map)

\noindent
Consider the map

\[
f:S^1\to S^1,
\qquad
f(z)=z^m,
\quad m\ge2.
\]
Its iterates satisfy \(f^n(z)=z^{m^n}.\) The fixed points of $f^n$ are solutions of \(z^{m^n}=z,\) which reduces to counting roots of unity. The corresponding zeta function captures this periodic structure. Hence
\[
\#\operatorname{Fix}(f^n)=m^n-1.
\]
Therefore
\[
\log \zeta(f,t)
=
\sum_{n\ge1}\frac{m^n-1}{n}t^n
=
-\log(1-mt)+\log(1-t),
\]
so
\[
\zeta(f,t)=\frac{1-t}{1-mt}.
\]

\end{example}

This dynamical formalism becomes especially powerful when transferred to algebraic geometry. Namely:

\bigskip

\subsection{Hasse--Weil Zeta Functions}

Let $X$ be a quasi-projective variety over the finite field $\mathbb F_q$.

The Hasse--Weil zeta function of $X$ is defined by

\[
Z(X,s)
=
\exp\left(
\sum_{r=1}^{\infty}
\frac{\#X(\mathbb F_{q^r})}{r}(q^{-s})^r
\right).
\]

If we write \(z=q^{-s},\) then the zeta function becomes a generating function encoding the number of points of $X$ over finite field extensions. An important observation is that

\[
\#X(\mathbb F_{q^r})
=
\#\operatorname{Fix}(\sigma^r),
\]

where $\sigma$ denotes the Frobenius morphism on \(X(\overline{\mathbb F}_q).\)

Thus Hasse--Weil zeta functions are analogues of Artin--Mazur zeta functions, with the Frobenius dynamic.

\begin{example}\label{ExampleGL_n}
\begin{enumerate}
    \item The points of projective space \(\mathbb P^n\) over $\mathbb F_q $ parametrizes one-dimensional subspaces of \(\mathbb F_q^{\,n+1}\). Since \(\mathbb F_q^{\,n+1}\) has \(q^{n+1}-1\) nonzero vectors, and each line contains exactly \(q-1\) nonzero vectors, we obtain

\(
\#\mathbb P^n(\mathbb F_q)
=
\frac{q^{n+1}-1}{q-1}
=
q^n+q^{n-1}+\cdots+q+1
=
[n+1]_q.
\)

Thus
 $Z(\mathbb{P}^n,z) = exp(\sum_{r=1}^\infty \frac{(q^r)^n+(q^r)^{n-1}+\dots+q^r+1}{r}z^r)=\prod_{i=0}^n\left(\frac{1}{1-q^{i}z}\right)$

\item

To count invertible matrices over \(\mathbb F_q\), choose the columns successively. The first column can be any nonzero vector, giving \(q^n-1\) choices. The second column must be linearly independent from the first, giving \(q^n-q\) choices, and so on. Hence

\[
\#GL_n(\mathbb F_q)
=
(q^n-1)(q^n-q)\cdots(q^n-q^{n-1}).
\]

Factoring out powers of \(q\), one obtains

\[
\#GL_n(\mathbb F_q)
=
q^{n(n-1)/2}(q-1)^n[n]_q!.
\]

Using the definition of the Hasse--Weil zeta function one obtains the product formula

\[
Z(\operatorname{GL}_n/\mathbb F_q,z)
=
\prod_{S\subseteq[n]}
\bigl(1-q^{\,n^2-\sum_{i\in S}i}z\bigr)^{(-1)^{|S|+1}}.
\]

One can observe that for $n=1$ the Frobenius dynamic on $\mathbb G_m$ is the analogues to the circle map dynamic we discussed in Example \ref{Example:The Circle Map}.

\end{enumerate}

\end{example}

\bigskip

\section{Grassmannians and Gaussian Binomial Coefficients}\label{SectGrassmannians and Gaussian Binomial Coefficients}

Now, we return to the $q$-binomial coefficients from a geometric perspective.\\

\subsection{Points of Grassmannian over a Finite Field}\label{SubSectPointsOfGr}

Consider the Grassmannian

\[
\text{Gr}_m^n:=Gr(m,m+n),
\]

the variety parametrizing $m$-dimensional subspaces of an $(m+n)$-dimensional vector space.

Over a finite field $\mathbb F_q$, the set \(\text{Gr}_m^n(\mathbb F_q)\) consists of all $m$-dimensional subspaces of \(
\mathbb F_q^{m+n}.\) The group \(GL_{m+n}(\mathbb F_q)\) acts transitively on this set. By analyzing the stabilizer subgroup, one obtains

\[
\#\text{Gr}_m^n(\mathbb F_q)
=
\frac{[m+n]_q!}{[m]_q![n]_q!}
=
\genfrac{[}{]}{0pt}{}{m+n}{m}_q,
\]

see Example \ref{ExampleGL_n}. Thus, Gaussian binomial coefficients count points on Grassmannians over finite fields. This is the first major indication that $q$-combinatorics is fundamentally geometric.

\bigskip

\subsection{Geometric Interpretation Behind the $q$-Karajī--Pascal Identity}\label{SubSectGeoInt}

Fix a one-dimensional subspace \( \ell\subset \mathbb F_q^{m+n}.\) Partition the Grassmannian into two subsets:

\[
\mathcal U
=
\{T\in \text{Gr}_m^n:\dim(T\cap \ell)=0\},
\] and \[
\mathcal Z
=
\{T\in \text{Gr}_m^n:\dim(T\cap \ell)=1\}.
\]
These subsets correspond exactly to the two terms in the $q$-Karajī--Pascal recursion. Geometrically:

\begin{itemize}
    \item[-] $\mathcal U$ is an affine bundle over $\text{Gr}_m^{n-1}$.
    \item[-] $\mathcal Z$ identifies with $\text{Gr}_{m-1}^n$.
\end{itemize}

\begin{figure}[h]
\centering
\begin{tikzpicture}[scale=0.85,>=latex,thick]

\draw (0,0) circle (2);

\filldraw[
    fill=blue!20,
    draw=blue!60!black
]
(-0.45,0) ellipse (1.45 and 1.55);

\node at (-0.45,0) {$\mathcal U$};
\node at (1.25,0) {$\mathcal Z$};

\draw[->]
(-1.25,-1.0) -- (-4.0,-2.5)
node[midway,above,sloped]
{$\mathbb A^m_{\mathbb F_q}\text{-bdl}$};

\node at (-4.6,-2.9) {$\mathrm{Gr}_m^{\,n-1}$};

\draw[->]
(1.55,0.55) -- (4.0,-1.8)
node[midway,above,sloped]
{$\cong$};

\node at (4.6,-2.2) {$\mathrm{Gr}_{m-1}^{\,n}$};

\end{tikzpicture}
\caption{Stratification of the Grassmannian variety}
\end{figure}

Counting points over $\mathbb F_q$ yields

\[
\genfrac{[}{]}{0pt}{}{m+n}{m}_q
=
\genfrac{[}{]}{0pt}{}{m+n-1}{m-1}_q
+
q^m
\genfrac{[}{]}{0pt}{}{m+n-1}{m}_q
.
\]
What originally appeared as a combinatorial recursion is therefore a reflection of a geometric stratification.

\bigskip

\begin{remark}\label{RemDualFormula}
Dually, one can fix a hyperplane $H$ (instead of a line). Then can divide the set of $m$-dimensional subspaces into two disjoint subsets: First those subspaces who are contained in $H$, and second those subspaces who are not contained in $H$. It is not hard to show that counting the points of Grassmannian in this way, gives the second q-binomial analog of Karajī-Pascal identity:
    \[
\genfrac{[}{]}{0pt}{}{m+n}{m}_q
=
q^n \genfrac{[}{]}{0pt}{}{m+n-1}{m-1}_q
+
\genfrac{[}{]}{0pt}{}{m+n-1}{m}_q
.
\]
\end{remark}

\section{Weil Conjectures and the Birth of Motives}

\subsection{Cycles and equivalence relations}

One of the fundamental problems of algebraic geometry is to study algebraic
subvarieties of a given variety up to a natural notion of equivalence.
For a variety \(X\), let \(Z^{p}(X)\) denote the free abelian group generated
by the irreducible closed subvarieties of codimension \(p\); its elements are
called \emph{algebraic cycles} of codimension \(p\).

The appropriate equivalence relation is \emph{rational equivalence}, which
identifies cycles that occur as fibers of a family parameterized by the
projective line. Intuitively, two cycles are rationally equivalent if one can
be continuously deformed into the other through an algebraic family over
\(\mathbb{P}^{1}\). The resulting quotient is the Chow group.

More precisely, let \(z^{p}(X,1)\) denote the group of codimension \(p\)
cycles on \(X\times\mathbb{P}^{1}\) that meet the fibers over
\(0,\infty\in\mathbb{P}^{1}\) properly. The boundary map

\[
\partial=i_{0}^{*}-i_{\infty}^{*}:
z^{p}(X,1)\longrightarrow Z^{p}(X),
\]

is defined by taking the difference of the restrictions to the fibers over
\(0\) and \(\infty\). The Chow group of codimension \(p\) is then

\[
CH^{p}(X)
=
\operatorname{coker}
\left(
\partial:
z^{p}(X,1)\longrightarrow Z^{p}(X)
\right).
\]
Equivalently,
\(
CH^{p}(X)=Z^{p}(X)/\sim_\text{rational equivalence}.
\)

When \(X\) is a smooth algebraic variety, the intersection product of cycles
descends to rational equivalence, endowing the graded group
\[
CH^{*}(X)=\bigoplus_{p\ge0}CH^{p}(X)
\]
with the structure of a graded commutative ring, called the \emph{Chow ring}; see \cite{Ful}.
This ring encodes the intersection theory of algebraic cycles and is a
fundamental tool in algebraic geometry.

The theory of Chow groups originated in the work of Wei-Liang Chow in the
1950s, who introduced a rigorous algebraic theory of intersection of cycles.
It was subsequently developed by Grothendieck and his school, where Chow
groups became one of the basic invariants of algebraic varieties and a
cornerstone of modern intersection theory. They play a central role in the
study of characteristic classes, correspondences, and algebraic cycles, and
form the algebraic analogue of singular homology in topology.

A major breakthrough came with Bloch's introduction of \emph{higher Chow
groups}, which refine the classical Chow groups by organizing algebraic cycles
into a chain complex, cf. \cite{Bloch}. In particular, the ordinary Chow groups appear as the
degree-zero homology,

\[
CH^{p}(X)=H_{0}\bigl(z^{p}(X,\bullet)\bigr),
\]

while the higher homology groups \(CH^{p}(X,n)\) provide a geometric model for
motivic cohomology and connect algebraic cycles with algebraic \(K\)-theory. We explain this a bit further in subsection \ref{SubSectHigherChow}.

One may think of the parameter space \(\mathbb{P}^{1}\) as an algebraic
``model of time.''  A cycle on \(X\times\mathbb{P}^{1}\) describes a
one-parameter family of algebraic cycles on \(X\), and the two distinguished
points \(0,\infty\in\mathbb{P}^{1}\) represent the ``initial'' and ``final''
states of the family.  The boundary map

\[
\partial=i_0^*-i_\infty^*
\]

measures the change of the family between these two moments.  Declaring such
differences to be zero means that two cycles are identified whenever one can
evolve into the other through an algebraic family parameterized by
\(\mathbb{P}^{1}\).  This is precisely the notion of \emph{rational
equivalence}, and the resulting quotient is the Chow group.

The choice of \(\mathbb{P}^{1}\) is not arbitrary.  It is the simplest complete
rational curve and serves as the basic parameter space for rational motions.
From this viewpoint, \(\mathbb{P}^{1}\) plays the role of a universal clock:
it provides the simplest algebraic notion of a one-dimensional time along
which cycles may vary.

One can enlarge this notion by allowing more general parameter spaces.  If two
cycles are connected by a family parameterized by an arbitrary connected
algebraic variety \(T\), they are said to be \emph{algebraically equivalent}.
Thus rational equivalence uses the specific ``time model''
\(\mathbb{P}^{1}\), whereas algebraic equivalence permits many different
models of time, namely arbitrary connected algebraic varieties.  Since every
rational equivalence is an algebraic equivalence, there is a natural
implication
\[
\text{rational equivalence}
\;\Longrightarrow\;
\text{algebraic equivalence},
\]
but the converse is false. A classical way to see this is through the Néron--Severi group
of an abelian variety \(A\). Recall that
\[
\operatorname{NS}(A)
=
\operatorname{Pic}(A)/\operatorname{Pic}^{0}(A),
\]
where \(\operatorname{Pic}(A)\) is the group of line bundles on \(A\) and
\(\operatorname{Pic}^{0}(A)\) consists of those line bundles that are
algebraically equivalent to zero. Since \(CH^{1}(A)\cong \operatorname{Pic}(A),\)
passing from rational equivalence to algebraic equivalence amounts to
quotienting \(CH^{1}(A)\) by \(\operatorname{Pic}^{0}(A)\), yielding the
Néron--Severi group \({NS}(A).\) The subgroup \(\operatorname{Pic}^{0}(A)\) is a positive-dimensional
abelian variety, whereas \(\operatorname{NS}(A)\) is a finitely generated
abelian group. Thus there exist many divisor classes that are algebraically
equivalent to zero but are not rationally equivalent to zero. Consequently,
rational equivalence is strictly stronger than algebraic equivalence in
general.

\subsection{Birth of Motives}

The theory of motives emerged from attempts to understand deep analogies between topology, algebraic geometry, and arithmetic. One of the central motivations came from the celebrated Weil conjectures, formulated by André Weil in 1949 \cite{Weil}.

Weil proposed that algebraic varieties over finite fields should satisfy certain properties that are indicated by their topological nature. His philosophy may be summarized by the famous slogan

\begin{quote}
``Topology knows arithmetic.''
\end{quote}

More precisely, Weil observed that the number of points of a variety over finite fields, seemed to behave as though they were governed by hidden topological invariants! Let us recall the statement. 
Let \(X\) be a smooth projective variety of dimension \(d\) over the finite field \(\mathbb F_q\). Then the conjectures state:

\begin{enumerate}
    \item \textbf{Rationality:} The zeta function is a rational function:
    
    \[
    Z(X,z)\in \mathbb Q(z).
    \]

    \item \textbf{Functional Equation:}  The zeta function satisfies a symmetry replacing \(z\) and \(q^{-d}z^{-1}\):
    
    \[
    Z\!\left(X,q^{-d}z^{-1}\right)
    =
    \pm q^{\chi d/2} z^{\chi} Z(X,z),
    \]
    
    where \(\chi\) is the Euler characteristic of \(X\).

    \item \textbf{Betti Number:} The zeta function admits a factorization
    
    \[
    Z(X,z)
    =
    \frac{
    P_1(z)P_3(z)\cdots P_{2d-1}(z)
    }{
    P_0(z)P_2(z)\cdots P_{2d}(z)
    },
    \]

    and the degrees of the polynomials \(P_i\) equal the Betti numbers:
    
    \[
    \deg P_i=b_i.
    \]

    \item \textbf{Riemann Hypothesis:} The reciprocal roots of \(P_i(z)\) have absolute value
    
    \[
    q^{i/2}.
    \]
    
\end{enumerate}

Here is a short history of the proof. Grothendieck developed étale cohomology and proved the cohomological interpretation and functional equation in 1965, see \cite{GrothI} and \cite{GrothII}, although the rationality part was already proved by Dwork using $p$-adic methods \cite{Dwork} in 1960. Finally Deligne proved the analogue of the Riemann hypothesis \cite{Deligne} in 1974, see also \cite{Kat}.  Below, let us briefly explain how motive theory was born, not as a separate chapter, but as the story that ties these historical milestones into a single unifying vision. History tells us what happened. Story tells us what it meant.    

Weil himself proved several important cases of his conjectures, including the cases of algebraic curves and abelian varieties. His method strongly suggested that a deeper geometric mechanism was at work. A crucial observation was the following: 

``if one could construct a sufficiently good cohomology theory for algebraic varieties, then most of the Weil conjectures would follow formally from standard properties of cohomology, analogous to those used in algebraic topology.

However, serious obstacles soon appeared. Jean-Pierre Serre observed that there could be no satisfactory cohomology theory with coefficients in $\mathbb Q$ possessing all the desired properties in positive characteristic. In particular, the classical tools of singular cohomology were unavailable for varieties defined over finite fields.

This difficulty led Grothendieck to develop a remarkable collection of new cohomology theories during the 1960s, including $\ell$-adic cohomology. These theories successfully captured many of the expected properties of a universal cohomological theory, despite depending on the auxiliary prime $\ell$.

Grothendieck noticed something striking: although these different cohomology theories were constructed in very different ways, they behaved as though they were shadows or realizations of a single underlying object. This observation led to his revolutionary idea of \emph{motives}, as he describes in a letter to Serre in 1964. He says:

\begin{quote}
 I will say that something is a
``motive" over $k$ if it looks like the $\ell$-adic cohomology group of an algebraic scheme over
$k$, but is considered as being independent of $\ell$, with its “integral structure”, or let us say
for the moment its “$\mathbb Q$” structure, coming from the theory of algebraic cycles. The sad
truth is that for the moment I do not know how to define the abelian category of motives,
even though I am beginning to have a rather precise yoga for this category, let us call it
$M(k)$. For example, for any prime $\ell\neq p$, there is an exact functor $T_\ell$ from $M(k)$ into the
category of finite-dimensional vector spaces over $\mathbb Q_\ell$ on which the pro-group $(Gal(\overline{k_i}/k_i))_i$
acts, where $k_i$ runs over subextensions of finite type of $k$ and $\overline{k_i}$ is the algebraic closure of
$k_i$ in $k$; this functor is faithful but not, of course, fully faithful.
\end{quote}

Manin explains the origin of the word by quoting Herbert Read, the English poet and art critic, writing about the French Impressionist painter Paul C\'ezanne:

His method of painting was first to choose his `motif' --- a landscape, a person to be portrayed, a still-life; then to bring into being his visual apprehension of this motif; and in this process to lose nothing of the vital intensity that the motif possessed in its actual existence. For further discussion on Grothendieck's and Cézanne's notions of ``motif", see \cite{Kejian}.

\begin{figure}[H]
\centering
\includegraphics[width=0.45\textwidth]{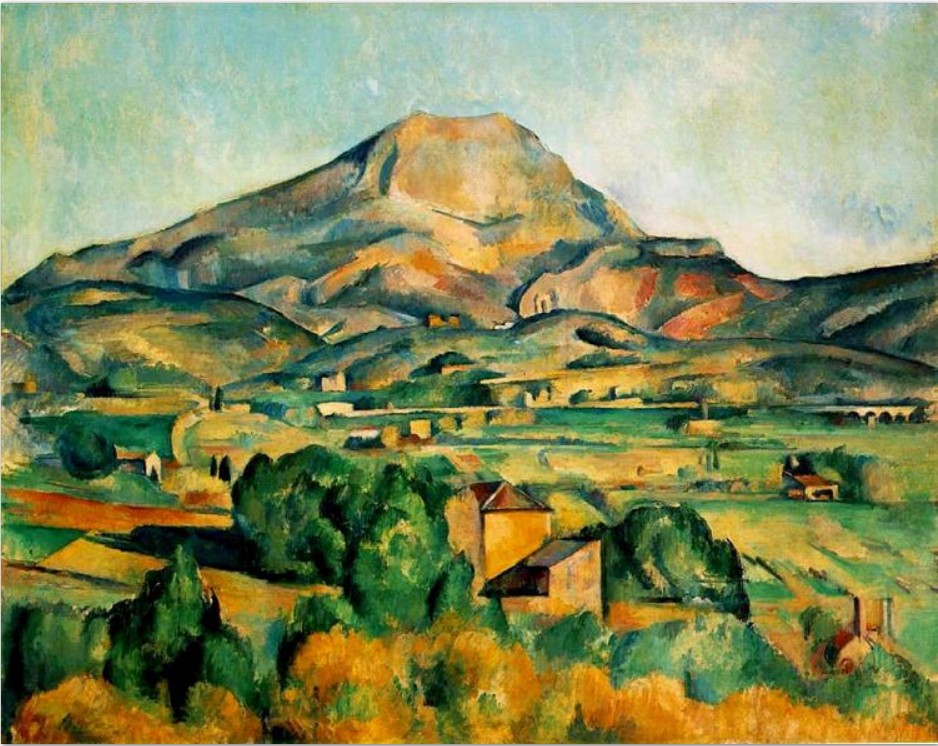}
\caption{Mont Sainte-Victoire, Paul C\'ezanne c.1895}
\end{figure}

So, the word ``motive'' accordingly was intended to suggest the common source or hidden structure behind all cohomological realizations. Rather than viewing each cohomology theory independently, Grothendieck envisioned a universal category from which all such theories could be derived through suitable realization functors.

One of Grothendieck's deepest insights was that the classical notion of a morphism between varieties was not sufficient for capturing these hidden structures. Instead, he proposed enlarging the morphisms themselves. Starting from the category of smooth projective varieties, Grothendieck constructed the category of \emph{Chow motives} by replacing ordinary morphisms with algebraic correspondences modulo suitable equivalence relations. In this framework, geometric information is encoded not merely by maps between varieties, but by algebraic cycles relating them. More explicitly:

\subsection{The Category of Correspondences}

Let \(k\) be a field. The category of correspondences,
denoted by \(\mathrm{Corr}(k)\), is defined as follows.

\begin{itemize}
\item[-] The objects are smooth projective varieties over \(k\).

\item[-] For two objects \(X\) and \(Y\), the group of morphisms from \(X\) to
\(Y\) is
\[
\operatorname{Hom}_{\mathrm{Corr}(k)}(X,Y)
:=
CH^{\dim X}(X\times Y).
\]
Its elements are called \emph{correspondences} from \(X\) to \(Y\).

\item[-] If
\[
\alpha\in CH^{\dim X}(X\times Y),
\qquad
\beta\in CH^{\dim Y}(Y\times Z),
\]
their composition is defined by
\(\beta\circ\alpha=(p_{13})_*\bigl(p_{12}^{*}\alpha\cdot p_{23}^{*}\beta\bigr),\)
where
\[
p_{12},\,p_{23},\,p_{13}
:
X\times Y\times Z
\longrightarrow
X\times Y,\,
Y\times Z,\,
X\times Z
\]
are the natural projections.

\item[-] The identity morphism of an object \(X\) is the diagonal class
\[
\operatorname{id}_X
=
[\Delta_X]
\in
CH^{\dim X}(X\times X).
\]
\end{itemize}

The category of Chow motives is then obtained from this category.

\subsection{The category of Chow motives}

The category of effective Chow motives,
denoted by \(\operatorname{CHM}^{\mathrm{eff}}(k),
\) is the pseudo-abelian (Karoubian) completion of \(\mathrm{Corr}(k)\).

\begin{itemize}
\item[-] The objects are pairs \(
(X,p),\) where \(X\) is a smooth projective variety and \(p\in CH^{\dim X}(X\times X)\) is an idempotent correspondence, i.e. \(p\circ p=p.\)

\item[-] The morphisms \((X,p)\longrightarrow(Y,q)\) are correspondences \(\alpha\in CH^{\dim X}(X\times Y)\) such that
\(\alpha=q\circ\alpha\circ p.\)
Equivalently,
\[
\operatorname{Hom}\bigl((X,p),(Y,q)\bigr)
=
q\circ
CH^{\dim X}(X\times Y)
\circ p.
\]
\end{itemize}

The category of Chow motives,
denoted by \(
\operatorname{CHM}(k),
\) is obtained from
\(\operatorname{CHM}^{\mathrm{eff}}(k)\)
by adjoining Tate twists. Its objects are triples \((X,p,n),\) where \(X\) is a smooth projective variety, \(p\) is an idempotent projector, and \(n\in\mathbb Z\). The morphisms are given by

\[
\operatorname{Hom}_{\operatorname{CHM}(k)}
\bigl((X,p,n),(Y,q,m)\bigr)
=
q\circ
CH^{\dim X+m-n}(X\times Y)
\circ p.
\]

The Chow motive associated to a smooth projective variety \(X\) is

\[
M(X):=(X,\Delta_X,0),
\]

where \(\Delta_X\) denotes the class of the diagonal in
\(CH^{\dim X}(X\times X)\).

He further formulated the \emph{standard conjectures}, which were intended to provide the foundational properties of this motivic category. If true, these conjectures would imply that the category of motives is Tannakian  semisimple. Such a structure would place the category of motives in a role similar to that of the category of representations of a reductive group. He says:

\bigskip

\textit{``The proof of the two standard conjectures would yield results going considerably further than Weil’s conjectures. They would form the basis of the so-called “theory of motives” which is a systematic theory of “arithmetic properties” of algebraic varieties, as embodied in their groups of classes of cycles for numerical equivalence. We have at present only a very small part of this theory in dimension one, as contained in the theory of abelian varieties. Alongside the problem of resolution of singularities, the proof of the standard conjectures seems to me to be the most urgent task in algebraic geometry."}

See last paragraph of \cite{GrothIII}.

For motives over a finite field $\mathbb F_q$, U. Jannsen, \cite{Jan}, proved that the category of pure motives modulo \emph{numerical equivalence} is a semisimple abelian category.

Although many aspects of Grothendieck's original vision remain conjectural, his ideas fundamentally reshaped modern algebraic geometry and laid the foundations for later developments, including Voevodsky's triangulated categories of motives. For further details in this direction we refer the interested reader to \cite{Milne}, \cite{EKM}, \cite{KleimanI}, \cite{KleimanII}, \cite{ManinII}, \cite{Beilinson} and \cite{Manin}.

\bigskip

\section{Voevodsky's Contribution and Triangulated Categories of Motives}

\subsection{Derived Categories and Triangulated Categories}

The notion of a triangulated category was introduced by Grothendieck and Verdier around 1963 in the context of homological algebra and derived categories. Namely, these categories arose naturally from attempts to formalize and abstract the structural properties of the derived category of an abelian category.

To understand the motivation, recall that in classical homological algebra one studies objects such as modules, sheaves, or chain complexes through exact sequences. An exact sequence

\[
0 \to A \to B \to C \to 0
\]
encodes the idea that the object $B$ is an extension of $C$ by $A$, in a precise algebraic sense.

However, when one passes to derived categories, ordinary exact sequences are no longer the fundamental structural objects. Instead, they are replaced by \emph{distinguished triangles}

\[
A \longrightarrow B \longrightarrow C \longrightarrow A[1].
\]

Here $A[1]$ denotes the shift of the complex $A$. The shift operation reflects the homological grading inherent in derived categories.

Distinguished triangles generalize exact sequences while behaving more naturally in the derived setting. They encode information about mapping cones, extensions, and homological relations between complexes.



Triangulated categories provide a flexible framework for studying geometric and cohomological phenomena where ordinary abelian categories are insufficient. They allow one to systematically handle:

\begin{itemize}
    \item[-] complexes of sheaves,
    \item[-] derived functors,
    \item[-] localization procedures,
    \item[-] homotopy-theoretic constructions,
    \item[-] and duality theories.
\end{itemize}

This abstraction became especially important in algebraic geometry, representation theory, and topology. In motive theory, triangulated categories play a central role because, as we will see, motivic constructions naturally involve complexes, homotopies, and localization phenomena that cannot be adequately captured inside ordinary abelian categories.

Voevodsky's triangulated categories of motives are therefore part of a broader mathematical movement in which derived and homotopical methods replace classical rigid algebraic structures with more flexible categorical frameworks.

\subsection{Voevodsky's Motivic Categories}
Grothendieck's original vision of motives aimed at constructing an abelian category that would serve as a universal receptacle for all cohomology theories. While this framework worked well, at least conceptually, for smooth projective varieties, major difficulties arose when one attempted to extend the theory to non-smooth quasi-projective varieties.

Vladimir Voevodsky realized that the requirement that the category of motives be abelian was, in some sense, too restrictive and too idealistic for the broader geometric situations encountered in the theory of schemes. Inspired by developments in homological algebra and derived categories, he proposed replacing the hoped-for abelian category with a more flexible triangulated framework.

Beginning in the 1990s, Voevodsky, together with several collaborators, constructed a family of \emph{tensor triangulated} categories of motives, including

\[
DM_{gm}^{eff}(k),
\qquad
DM_{gm}(k),
\qquad
DM_-(k),
\qquad \dots
\]
These categories are now central objects in the $\mathbb A^1$-homotopy theory of schemes.
\noindent
Here:


\begin{itemize}
    \item $DM_{gm}^{eff}(k)$ denotes the category of effective geometric motives, constructed from the bounded homotopy category of finite correspondences by imposing homotopy invariance and Mayer--Vietoris relations and then taking the pseudo-abelian completion. We recall that category of finite correspondences, denoted by $\mathrm{Cor}_k$,
has smooth $k$-schemes as objects, and morphisms from $X$ to $Y$ are
finite correspondences, namely finite $\mathbb{Z}$-linear combinations
of integral closed subschemes of $X\times Y$ that are finite over $X$ and
surjective over an irreducible component of $X$. Composition is defined
by the pullback--intersection--pushforward construction.
    
    \item $DM_{gm}(k)$ is obtained from $DM_{gm}^{eff}(k)$ by formally inverting the Tate motive $\mathbb{Z}(1)$, thereby allowing arbitrary Tate twists.
    
    \item $DM_-(k)$ is a larger triangulated derived category of bounded-above complexes of Nisnevich sheaves with transfers whose cohomology sheaves are homotopy invariant, containing geometric motives as a full subcategory.
\end{itemize}

There are functors \(M(-)\) from the category of schemes \(Sch_k\) to either of the above motivic categories, and similarly, compactly supported version \(M^c(-)\). For a proper scheme $X$ over $k$ these two functors agree, i.e. one has a canonical isomorphism $M^c(X)\cong M(X)$.

These functors behave in many ways like generalized cohomology theories. In particular, motivic categories satisfy analogues of several fundamental constructions from algebraic topology, including:

\begin{itemize}
    
    \item Mayer--Vietoris sequences,
For a scheme of finite type $X$ over $k$ and an open
covering $X = U \cup V$ of $X$ one has a canonical distinguished triangle
$$M(U\cap V)\to M(X)\to M(U)\oplus M(V) \to M(U\cap V)[1]
$$
    \item Homotopy invariance,

For a scheme of finite type $X$ over $k$ the morphism
$M(X \times \mathbb A_k^1) \to M(X)$ is an isomorphism. For motives with compact support we have $M^c(X \times \mathbb A_k^1) \cong M^c(X)(1)[2]$. Note that the unit object of our tensor structure is $\mathbb Z:=M(Spec (k))$. For any smooth scheme $X$ over $k$ the morphism $X \to Spec(k)$
gives us a morphism in $DM_{gm}^{eff}(k)$ of the form $M(X) \to \mathbb Z$. Let $\tilde M(X)$ be defined by the distinguished triangle $\tilde M(X)\to M(X) \to \mathbb Z$ and set $\mathbb Z(1):=\tilde M(\mathbb P^1)[-2]$ and $M(X)(n):=M(X)\otimes \mathbb Z(1)^{\otimes n}$.
    
    \item K\"unneth formulas,
 For schemes of finite type $X$ and $Y$ over $k$ one has a canonical
isomorphism $M(X\times Y) = M(X)\otimes M(Y)$.

    \item Localization triangles,

For a scheme $X$ of finite type over $k$ and a closed subscheme $Z$ in $X$ one has a canonical distinguished triangle 

\begin{equation}\label{EqLocalizingTriangle}
M^c(Z)\to M^c(X)\to M^c(X\setminus Z) \to M^c(Z)[1]
\end{equation}

    \item Duality phenomena

    For any smooth equidimensional scheme $X$ of dimension $d$ over $k$ there is a canonical isomorphism
    
    $$M(X)^\vee = M^c(X)(-d)[-2d]$$

\end{itemize}

Thus motives provide a framework in which geometric decompositions, cohomological relations, and arithmetic information can all be studied simultaneously.



Voevodsky's approach, see \cite{MV}, dramatically expanded the scope of motive theory and transformed it into a workable and highly influential area of modern mathematics. His work ultimately led to major breakthroughs, including the proof of the Milnor conjecture, and the proof of the Bloch–Kato conjecture (the norm residue isomorphism theorem), using a number of innovative results of Markus Rost, see \cite{VoeI} and \cite{VoeII}.


\section{The Motivic Karajī--Pascal Identity}

We now return to the Karajī--Pascal identity from the perspective of motive theory. We will see what originally appeared as a simple combinatorial recursion turns out to reflect a geometric decomposition inside the category of motives.

\subsection{The Decomposition}\label{SubSectDecomposition}
The key observation is that the Grassmannian admits a natural stratification which gives rise to a motivic decomposition mirrors the recursive structure of the Gaussian binomial coefficients, see Section \ref{SectGrassmannians and Gaussian Binomial Coefficients}.

\begin{proposition}\label{PropositionMotivicKP}
We have the following decomposition
\begin{equation}\label{EqMotivicKhayyam-Pascal}
M(\text{Gr}_m^n)
=
M(\text{Gr}_{m-1}^n)
\oplus
M(\text{Gr}_m^{n-1})(m)[2m].
\end{equation}

in Voevodsky's triangulated category of motives $DM_{gm}(k)$.
\end{proposition}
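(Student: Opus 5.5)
We use the stratification of Subsection \ref{SubSectGeoInt} together with the localization triangle \eqref{EqLocalizingTriangle}. Since $\text{Gr}_m^n$ is smooth and projective, we have $M(\text{Gr}_m^n)\cong M^c(\text{Gr}_m^n)$, so we may work with compactly supported motives throughout.

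Fix a line $\ell\subset k^{m+n}$. The locus $\mathcal Z=\{T:\ell\subset T\}$ is closed and is identified with $\text{Gr}_{m-1}^n$ via $T\mapsto T/\ell\subset k^{m+n}/\ell$. It is smooth projective, so $M^c(\mathcal Z)\cong M(\text{Gr}_{m-1}^n)$. Its complement $\mathcal U$ maps to $\text{Gr}(m,k^{m+n}/\ell)=\text{Gr}_m^{n-1}$ via $T\mapsto (T+\ell)/\ell$. The fibre over $\bar T$ consists of the complements of $\ell$ in the $(m+1)$-dimensional space $\pi^{-1}(\bar T)$, which is a torsor under $\operatorname{Hom}(\bar T,\ell)\cong\mathbb A^m$. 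Thus $\mathcal U\to \text{Gr}_m^{n-1}$ is a vector bundle, namely $\mathcal S^\vee\otimes\ell$ with $\mathcal S$ the tautological subbundle. Concretely, $\mathcal U$ is the total space of this rank $m$ bundle.

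Next we need $M^c(\mathcal U)\cong M(\text{Gr}_m^{n-1})(m)[2m]$. On a Zariski trivialization $\mathcal U|_V\cong V\times\mathbb A^m$, the compact-support homotopy formula $M^c(X\times\mathbb A^1)\cong M^c(X)(1)[2]$, applied $m$ times, gives the local statement. Gluing is the main obstacle, since the triangulated category does not glue isomorphisms for free. There are two ways to handle it.

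\emph{Duality.} $M(\mathcal U)\cong M(\text{Gr}_m^{n-1})$ by homotopy invariance, because a vector bundle is Zariski locally trivial and one can use Mayer--Vietoris. Then duality for the smooth variety $\mathcal U$ of dimension $d=m(n-1)+m=mn$ gives
\[
M^c(\mathcal U)\cong M(\mathcal U)^\vee(mn)[2mn]\cong M(\text{Gr}_m^{n-1})^\vee(mn)[2mn].
\]
Applying duality again to the smooth projective $\text{Gr}_m^{n-1}$ of dimension $m(n-1)$, we get $M(\text{Gr}_m^{n-1})^\vee\cong M(\text{Gr}_m^{n-1})(-m(n-1))[-2m(n-1)]$. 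Combining these yields $M(\text{Gr}_m^{n-1})(m)[2m]$.

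\emph{Alternatively}, one can avoid gluing by induction on Schubert cells, where the bundle is trivial on each cell.

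The localization triangle now reads
\[
M(\text{Gr}_{m-1}^n)\to M(\text{Gr}_m^n)\to M(\text{Gr}_m^{n-1})(m)[2m]\xrightarrow{\ \delta\ } M(\text{Gr}_{m-1}^n)[1].
\]
It remains to show that $\delta=0$, so that the triangle splits. Inductively, using the same decomposition or the cellular structure, both Grassmannians are direct sums of pure Tate motives $\mathbb Z(a)[2a]$. Hence $\delta$ is a sum of maps $\mathbb Z(a)[2a]\to\mathbb Z(b)[2b+1]$. These lie in
\[
\operatorname{Hom}(\mathbb Z,\mathbb Z(b-a)[2(b-a)+1])=H^{2c+1,c}(k),\qquad c=b-a,
\]
which vanishes for all $c$. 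For $c\le 0$ this follows from the vanishing of negative-weight and weight-zero motivic cohomology in degree $\ne 0$. For $c>0$ it follows because $H^{p,q}(k)=0$ when $p>q$, and here $2c+1>c$. Therefore $\delta=0$ and the triangle splits, giving \eqref{EqMotivicKhayyam-Pascal}.

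Alternatively, one can construct a splitting directly using the Gysin map for the closed embedding $\mathcal Z\hookrightarrow\text{Gr}_m^n$. The Tate-vanishing argument is the one I would write out. The base cases $m=0$ or $n=0$, where the Grassmannian is a point, are trivial.
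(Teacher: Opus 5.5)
Your argument is correct and shares the paper's skeleton: the localization triangle for $\mathcal Z\subset\text{Gr}_m^n$ with open complement $\mathcal U$, together with $M^c(\mathcal Z)\cong M(\text{Gr}_{m-1}^n)$ and $M^c(\mathcal U)\cong M(\text{Gr}_m^{n-1})(m)[2m]$. The splitting, however, rests on a different key step.

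The paper exhibits an explicit section. The closure of the graph of $p\colon\mathcal U\to\text{Gr}_m^{n-1}$ in $\text{Gr}_m^n\times\text{Gr}_m^{n-1}$ is an $mn$-dimensional cycle. Read as a correspondence from $\text{Gr}_m^{n-1}$ to $\text{Gr}_m^n$, it gives a morphism $M(\text{Gr}_m^{n-1})(m)[2m]\to M(\text{Gr}_m^n)$. Its restriction over $\mathcal U$ is the graph of $p$ itself, i.e.\ the pull-back identification with $M^c(\mathcal U)$.

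You instead show that the connecting map $\delta$ is zero, by induction on $m+n$. Both outer terms are sums of $\mathbb Z(a)[2a]$, and by cancellation $\operatorname{Hom}(\mathbb Z(a)[2a],\mathbb Z(b)[2b+1])=H^{2c+1,c}(k)$ vanishes for every $c=b-a$.

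What your route buys:
\begin{itemize}
\item It needs the outer terms only up to abstract isomorphism. So nothing has to be checked about how a cycle interacts with the maps of the localization triangle, a compatibility the paper asserts without verifying.
\item It actually justifies $M^c(\mathcal U)\cong M(\text{Gr}_m^{n-1})(m)[2m]$, via homotopy invariance and duality, where the paper just says ``fibre bundle''.
\end{itemize}

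What the paper's route buys:
\begin{itemize}
\item The splitting comes from an algebraic cycle, hence an explicit projector already among Chow correspondences.
\item It is the kind of argument that survives for relative cellular fibrations over bases that are not pure Tate. Your weight-vanishing argument is tied to proving simultaneously that every Grassmannian is pure Tate.
\end{itemize}

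One small point: a torsor under $\mathcal S^\vee\otimes\ell$ is a vector bundle only once it has a section. You get one by fixing a complement $W$ of $\ell$ and sending $\bar T$ to its lift in $W$. Homotopy invariance would hold for a Zariski-locally trivial affine bundle anyway.
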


\begin{proof}

In Voevodsky's triangulated category of motives, the stratification explained in Subsection \ref{SubSectGeoInt}, gives via the localization triangle

\begin{equation}\label{EqMotivicKhayyam-PascalI}
M^c(\mathcal Z) \to M^c(\text{Gr}_m^n)
\to M^c(\mathcal U)
\to M^c(\mathcal Z)[1].
\end{equation}
Since $\mathcal Z$ and $\text{Gr}_m^n$ are projective we have $M^c(\mathcal Z)=M(\mathcal Z)$ and $M^c(\text{Gr}_m^n)=M(\text{Gr}_m^n)$, moreover, since $\mathcal U$ is a fiber bundle over $Gr_m^{n-1}$, we have $$M^c(\mathcal U)=M(\text{Gr}_m^{n-1})(m)[2m].$$ 
The closure of the graph of $p : \mathcal U \to \text{Gr}_m^{n-1}$ in $\text{Gr}_m^{n}\times \text{Gr}_m^{n-1}$ defines a cycle in $CH_{mn}(\text{Gr}_m^{n}\times \text{Gr}_m^{n-1})$ and gives a splitting of the above triangle (\ref{EqMotivicKhayyam-PascalI}), thus

\begin{equation*}
M(\text{Gr}_m^n)
=
M(\text{Gr}_{m-1}^n)
\oplus
M(\text{Gr}_m^{n-1})(m)[2m].
\end{equation*}

\end{proof}

As a remarkable feature of motives, one may see that different mathematical expressions can be seen as different realizations of the same motivic decomposition.

\subsection{Étale Realization and the $q$-Karajī--Pascal Identity}

Applying the étale realization functor converts motives into $\ell$-adic cohomological data. Computing the trace of Frobenius on the $\ell$-adic realization gives the counting points over finite fields. Thus the decomposition in Proposition \ref{PropositionMotivicKP} produces the classical $q$-analogue of Karajī--Pascal identity:

\[
\genfrac{[}{]}{0pt}{}{m+n}{m}_q
=
\genfrac{[}{]}{0pt}{}{m+n-1}{m-1}_q
+
q^m
\genfrac{[}{]}{0pt}{}{m+n-1}{m}_q.
\]

Thus the Gaussian binomial recursion is not merely a combinatorial coincidence; it reflects an actual decomposition in algebraic geometry.


\subsection{Betti Realization and Partition Identities}

If one instead applies Betti realization to the decomposition in Proposition \ref{PropositionMotivicKP}, the motivic decomposition yields topological information about Grassmannians, and then according to the Weil conjectures, one obtains the following identity involving partition functions:

\[
p(n,m,k)
=
p(n,m-1,k)
+
p(n-1,m,k-m),
\]

where $p(n,m,k)$ denotes the number of partitions of $k$ into at most $m$ parts, each part less than or equal to $n$.

This identity admits a natural combinatorial interpretation:

\begin{itemize}
    \item either we have less than $m$ parts,
    \item or one can remove one from each part, transforming the problem to partitions of $k-m$ into at most $m$ parts each part less than or equal to $n-1$.
\end{itemize}

From the motivic viewpoint, however, this recursion is no longer isolated combinatorics; it arises from geometric and cohomological structures associated with Grassmannians!
\begin{remark}
   Note that we have a dual equality \( p(n,m,k) =
p(n,m-1,k-n)
+
p(n-1,m,k),
\)
 which can be derived from another decomposition of Grassmannians, you will see in the next subsections, see theorem \ref{PoincareGrassmannianDecomposition}.
\end{remark}
\bigskip

\subsection{Zeta Functions and Motivic Factorization}

The motivic decomposition also induces relations among Hasse--Weil zeta functions.

Indeed, the decomposition of motives implies the factorization

\[
Z(\text{Gr}_m^n,z)
=
Z(\text{Gr}_{m-1}^n,z)
\cdot
Z(\text{Gr}_m^{n-1},q^m z).
\]
This identity reflects how arithmetic information decomposes according to the motivic splitting. The shifted argument $q^m z$ corresponds to the Tate twist appearing in the motivic decomposition. The Tate twist modify the weights of Frobenius eigenvalues and therefore alter the zeta function accordingly.

\begin{remark}
    The Hasse-Weil zeta function counts points over finite fields. There is the notion the motivic zeta function encodes the entire cohomological footprint of a variety across all weights simultaneously. It is the generating function not of numbers, but of motives themselves. We are not going this in this note, but the interested reader can refer to \cite{Kah}.
\end{remark}

\bigskip

\subsection{Euler Characteristics and Classical Karajī--Pascal Identity}

Another realization of the motivic decomposition is obtained by taking Euler characteristics.

Applying the Euler characteristic to both sides of the motivic identity yields

\[
\chi(\text{Gr}_m^n)
=
\chi(\text{Gr}_m^{n-1})
+
\chi(\text{Gr}_{m-1}^n).
\]

Since the Euler characteristic of the Grassmannian equals the corresponding binomial coefficient, one recovers the classical Karajī--Pascal identity:

\begin{equation}
\binom{m+n}{m}
=
\binom{m+n-1}{m}
+
\binom{m+n-1}{m-1}.
\end{equation}

Thus we have seen so far that the ordinary Karajī--Pascal recursion, the $q$-binomial identity, partition recursions, and zeta-function factorizations all emerge as various realizations of the same motivic decomposition. This is a simple demonstration of the unifying power of motives. But one can go further:

\bigskip

\subsection{Poincaré Duality}

The motivic decomposition of the Grassmannian not only explains the recursive structure of Gaussian binomial coefficients, but also their remarkable symmetry. This symmetry is a manifestation of Poincaré duality.
Let us first explain the following:

Let \(X\) be a scheme of finite type over $\mathbb{Z}$. We say that \(X\) is
\emph{polynomial count} if there exists a polynomial \(P_X(y)\in \mathbb Z[y]\), such that for every finite field \(\mathbb F_q\), the number of elements of \( X(\mathbb F_q)\) equals \(P_X(q).\) Let \(P_X(y)=\sum_{k=0}^N a_k y^k.\) The Hasse--Weil zeta function of \(X\) over \(\mathbb F_q\) is

\[
Z(X,z)
=
\exp\!\left(
\sum_{r\ge1}
\frac{\sharp X(\mathbb F_{q^r})}{r}z^r
\right).
\]

Now the definition of the zeta function gives

\[
\begin{aligned}
\log Z(X,z)
=
\sum_{r\ge1}
\frac{P_X(q^r)}{r}z^r\\
=
\sum_{k=0}^N
a_k
\sum_{r\ge1}
\frac{(q^k z)^r}{r}.
\end{aligned}
\]

Using the identity \(\sum_{r=1}^\infty \frac{x^r}{r}
=
-\log(1-x),\) we obtain

\[
\log Z(X,z)
=
\sum_{k=0}^N -a_k \log(1-q^k z).
\]

Exponentiating yields

\[
Z(X,z)
=
\prod_{k=0}^N
(1-q^k z)^{-a_k}.
\]

Therefore:
\begin{proposition}
If \(X\) is polynomial count with counting polynomial

\[
P_X(y)=\sum_k a_k y^k,
\]

then its Hasse--Weil zeta function equals \(\prod_k (1-q^k z)^{-a_k}.\)
    
\end{proposition}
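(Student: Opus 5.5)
The plan is to compute $\log Z(X,z)$ directly from the definition of the Hasse--Weil zeta function. We substitute the counting polynomial for the point counts and reorganize the resulting double sum into a finite sum of logarithms. Essentially this is the computation displayed just before the statement; the plan is to make each step explicit.

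First, by hypothesis $\#X(\mathbb F_{q^r})=P_X(q^r)$ for every $r\ge1$. This is the point where we use that $X$ is polynomial count for \emph{every} finite field, since we apply the hypothesis to the extensions $\mathbb F_{q^r}$ and not only to $\mathbb F_q$. Writing $P_X(y)=\sum_{k=0}^N a_k y^k$ gives
\[
\log Z(X,z)=\sum_{r\ge1}\frac{1}{r}\sum_{k=0}^N a_k q^{kr}z^r .
\]

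Second, we interchange the finite sum over $k$ with the sum over $r$. This is legitimate as an identity of formal power series in $z$: for each fixed power $z^r$ only finitely many terms contribute. It is also legitimate analytically for $|z|<q^{-N}$, where every series converges absolutely. After the interchange we have
\[
\log Z(X,z)=\sum_{k=0}^N a_k\sum_{r\ge1}\frac{(q^kz)^r}{r}.
\]

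Third, we apply the formal identity $\sum_{r\ge1}x^r/r=-\log(1-x)$ with $x=q^kz$. This yields $\log Z(X,z)=-\sum_k a_k\log(1-q^kz)$. Exponentiating, and using that $\exp$ converts sums into products on formal power series with zero constant term, gives $Z(X,z)=\prod_k(1-q^kz)^{-a_k}$. Here the exponents $a_k$ are integers, possibly negative, and each factor $(1-q^kz)^{-a_k}$ is a well-defined power series with constant term $1$.

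There is no real obstacle in this argument. The only point needing care is to justify the interchange of summations and the $\exp/\log$ manipulations. I would handle this by working throughout in the ring $\mathbb Q[[z]]$, where $\exp$ and $\log$ are mutually inverse bijections between series with zero constant term and series with constant term $1$, and where they turn sums into products.
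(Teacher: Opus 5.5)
Your proposal is correct and follows the paper's argument step for step: substitute $P_X(q^r)$ into $\log Z(X,z)$, swap the finite sum over $k$ with the sum over $r$, apply $\sum_{r\ge1}x^r/r=-\log(1-x)$ with $x=q^kz$, and exponentiate. Your additional remarks make explicit what the paper leaves implicit: the hypothesis is needed for every extension $\mathbb F_{q^r}$, and working in $\mathbb Q[[z]]$ justifies both the interchange and the $\exp/\log$ manipulations with possibly negative integer exponents $a_k$.
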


In particular, if \(X\) is a smooth projective variety whose motive is pure Tate, i.e. a direct sum of the form

\[
M(X)
\simeq
\bigoplus_k \mathbb Q(k)[2k]^{a_k},
\]
then the coefficients \(a_k\) are the Betti numbers, and the zeta function factors exactly as:
\[
Z(X,z)
=
\prod_k (1-q^k t)^{-b_{2k}}.
\]

Recall that for a smooth projective complex variety \(X\) of complex dimension \(d\), Poincar\'e duality states that

\[
H^i(X,\mathbb{Q})
\cong
H^{2d-i}(X,\mathbb{Q})^\vee.
\]
Hence, the Betti numbers satisfy

\[
b_i(X)=b_{2d-i}(X).
\]

For the Grassmannian \(\text{Gr}_m^n,\) the complex dimension is \(d = mn.\) Since the cohomology of Grassmannians is concentrated in even degrees, Poincar\'e duality gives \(
b_{2k}
=
b_{2(mn-k)}.
\)
Thus the Poincaré polynomial of the Grassmannian is

\[
\mathcal{P}_t(\text{Gr}_m^n)
=
\sum_{k=0}^{mn} b_{2k} t^{2k},
\]

and one has the classical identity

\[
\mathcal{P}_t(\text{Gr}_m^n)
=
\genfrac{[}{]}{0pt}{}{m+n}{m}_{t^2}.
\]

Therefore Poincar\'e duality implies the symmetry relation

\[
\genfrac{[}{]}{0pt}{}{m+n}{m}_q
=
q^{mn}
\genfrac{[}{]}{0pt}{}{m+n}{m}_{q^{-1}}.
\]

Thus the Gaussian binomial coefficients form a palindromic sequence. Equivalently, if \(p(n,m,k)\) denotes the number of partitions of \(k\) fitting inside an \(m\times n\) rectangle, then

\[
p(n,m,k)
=
p(n,m,mn-k).
\]

Combinatorially, this symmetry corresponds to taking the complement of a Ferrers diagram inside the rectangle.

\begin{figure}[H]
\centering
\includegraphics[width=0.80\textwidth]{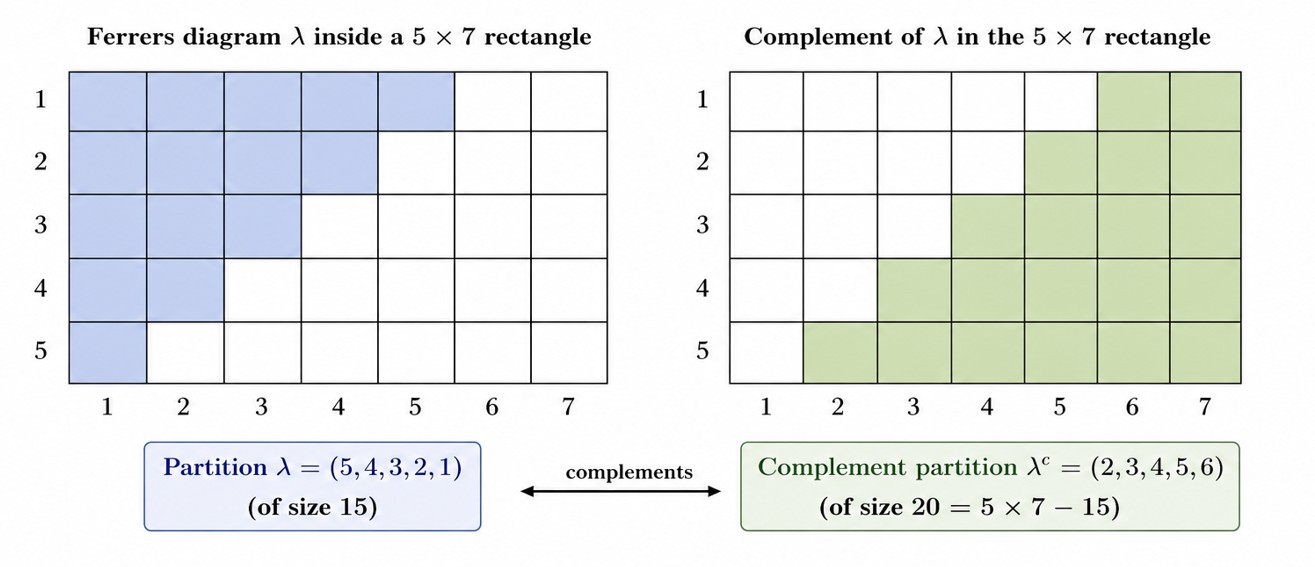}
\caption{A Ferrers diagram is a left-justified array of boxes with non-increasing row lengths. Here \(p(n,m,k)\) denotes the number of partitions of \(k\) fitting inside an \(m\times n\) rectangle}
\end{figure}

From the motivic viewpoint, however, the symmetry arises from the self-duality of the motive of Grassmannian. Recall that for a smooth projective variety \(X\) of dimension \(d\), one has

\begin{equation}\label{EqPoincareDualityForMotivess}
M(X)^\vee
\cong
M(X)(-d)[-2d].
\end{equation}

\noindent
Hence for the Grassmannian we have

\[
M(\text{Gr}_m^n)^\vee
\cong
M(\text{Gr}_m^n)(-mn)[-2mn],
\]
see \cite[Chapter 5, Theorem 4.3.7]{VSF}.
Let us now consider the dual of both sides of the decomposition given in Proposition \ref{PropositionMotivicKP}, we get:

\[
M(\text{Gr}_m^n)^{\vee}
=
M(\text{Gr}_{m-1}^n)^{\vee}
\oplus
M(\text{Gr}_m^{n-1})^{\vee}(-m)[-2m].
\]

\noindent
This together with the duality (\ref{EqPoincareDualityForMotivess}) gives

\[
M(\text{Gr}_m^n)(-mn)[-2mn]  =  M(\text{Gr}_{m-1}^n)(-n(m-1))[-2n(m-1)]\oplus  M(\text{Gr}_m^{n-1})(-mn)[-2mn]
\]

and finally we obtain the dual decomposition:

\[
M(\text{Gr}_m^n)
=
M(\text{Gr}_{m-1}^n)(n)[2n]
\oplus
M(\text{Gr}_m^{n-1}).
\]

Let us summarize the above discussion. Regarding the above and the discussion in Subsection \ref{SubSectDecomposition}, see Proposition \ref{PropositionMotivicKP}, we conclude that 

\begin{theorem}[Motivic version of Karajī--Pascal Identity]\label{PoincareGrassmannianDecomposition}
There is the following decomposition 

\begin{equation}\label{EqMotivicKhayyam-PascalII}
M(\text{Gr}_m^n)
=
M(\text{Gr}_{m-1}^n)
\oplus
M(\text{Gr}_m^{n-1})(m)[2m],
\end{equation}

and the dual decomposition:
\begin{equation}\label{EqDualMotivicKhayyam-Pascal}
M(\text{Gr}_m^n)
=
M(\text{Gr}_{m-1}^n)(n)[2n]
\oplus
M(\text{Gr}_m^{n-1}).
\end{equation}

\end{theorem}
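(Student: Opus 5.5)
The first decomposition \eqref{EqMotivicKhayyam-PascalII} is Proposition \ref{PropositionMotivicKP}, so it can be quoted as it stands. The work lies in the dual decomposition \eqref{EqDualMotivicKhayyam-Pascal}. I would obtain it in two independent ways: formally by duality, as sketched just before the theorem, and geometrically via the hyperplane stratification of Remark \ref{RemDualFormula}.

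\emph{Duality route.} Apply the contravariant, additive functor $(-)^\vee$ on $DM_{gm}(k)$ to \eqref{EqMotivicKhayyam-PascalII}. Duality commutes with finite direct sums, and it satisfies
\[
(A(m)[2m])^\vee \cong A^\vee(-m)[-2m],
\]
because $\mathbb Z(1)$ is invertible. Next substitute the Poincaré duality isomorphism \eqref{EqPoincareDualityForMotivess} for each Grassmannian, using $\dim \text{Gr}_m^n = mn$, $\dim \text{Gr}_{m-1}^n=(m-1)n$ and $\dim\text{Gr}_m^{n-1}=m(n-1)$. The twists then come out as follows.
\begin{itemize}
\item The left side becomes $M(\text{Gr}_m^n)(-mn)[-2mn]$.
\item The first summand becomes $M(\text{Gr}_{m-1}^n)(-(m-1)n)[-2(m-1)n]$.
\item The second summand becomes $M(\text{Gr}_m^{n-1})(-m(n-1)-m)[\,\cdots]$, which is $M(\text{Gr}_m^{n-1})(-mn)[-2mn]$.
\end{itemize}
Tensoring the resulting isomorphism with $\mathbb Z(mn)[2mn]$, which is an autoequivalence, yields \eqref{EqDualMotivicKhayyam-Pascal}. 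The only bookkeeping is this twist arithmetic, which I have just checked.

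\emph{Geometric route (as a cross-check).} Fix a hyperplane $H\subset k^{m+n}$. The closed stratum $\{T\subset H\}$ is isomorphic to $\text{Gr}_m^{n-1}$. On the open complement, $T\cap H$ is an $(m-1)$-plane in $H$, and the map $T\mapsto T\cap H$ exhibits the open stratum as an affine bundle of rank $n$ over $\text{Gr}_{m-1}^n$. Indeed, $T$ is determined by $T\cap H$ together with a line in $k^{m+n}/(T\cap H)$ not contained in $H/(T\cap H)$, and such lines form an $\mathbb A^{n}$.

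Now apply the localization triangle \eqref{EqLocalizingTriangle}. Use $M^c=M$ for the projective pieces, and use homotopy invariance in the form $M^c(X\times\mathbb A^1)\cong M^c(X)(1)[2]$, extended to Zariski-locally trivial affine bundles by Mayer--Vietoris and induction. This gives the triangle
\[
M(\text{Gr}_m^{n-1})\to M(\text{Gr}_m^n)\to M(\text{Gr}_{m-1}^n)(n)[2n]\to.
\]

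\textbf{Main obstacle.} The hard step in the geometric route is showing that this triangle splits. I would try the splitting used in Proposition \ref{PropositionMotivicKP}: the closure of the graph of the bundle projection gives a correspondence realizing the required section. A cleaner alternative is to observe that all terms are pure Tate, and that
\[
\operatorname{Hom}\bigl(\mathbb Z(a)[2a],\mathbb Z(b)[2b+1]\bigr)=0
\]
in the relevant range, so the connecting morphism vanishes. Because of this obstacle, I would present the duality argument as the proof and keep the geometric route only as a consistency check.
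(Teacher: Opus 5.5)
Your duality route is exactly the paper's argument: quote Proposition~\ref{PropositionMotivicKP}, dualize it, substitute $M(X)^\vee\cong M(X)(-d)[-2d]$ for the three Grassmannians, and untwist by $\mathbb Z(mn)[2mn]$, with the same twist bookkeeping the paper writes out. The hyperplane-stratification cross-check is an extra, and it is sound; in fact $\operatorname{Hom}\bigl(\mathbb Z(a)[2a],\mathbb Z(b)[2b+1]\bigr)=0$ for all $a,b$, so induction on $m+n$ would make it a complete proof of the dual decomposition that does not use duality at all.
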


Note that after applying the \'etale realization functor and computing the trace of Frobenius the decomposition (\ref{EqDualMotivicKhayyam-Pascal}) gives the dual formula \ref{RemDualFormula}. Compare also the discussion in Section \ref{SectLatticePaths}.

\subsection{The Karajī--Pascal Identity For (Higher) Chow Groups}\label{SubSectHigherChow}

Classical Chow groups classify algebraic cycles on a variety modulo rational equivalence, providing an algebraic analogue of homology. Motivated by the desire to develop a theory that captures higher-dimensional geometric and arithmetic information, Spencer Bloch introduced the higher Chow groups in the 1980s, cf. \cite{Bloch}. His key idea was to replace individual cycles by families of cycles parametrized by algebraic simplices, forming a chain complex whose homology defines the groups $CH^p(X,n)$. In this way, the classical Chow group appears as the degree-zero case, $CH^p(X,0)=CH^p(X)$, while the higher groups encode additional geometric information. Bloch's construction was later shown to provide a concrete realization of motivic cohomology, establishing a fundamental bridge between algebraic cycles, $K$-theory, and the theory of motives.








\begin{definition}[Bloch]
Let \(X\) be a quasi-projective variety over a field \(k\). For integers
\(p,n\ge0\), let \(z^{p}(X,n)\) denote the free abelian group generated by
integral closed subvarieties
\[
Z\subset X\times \Delta^{n},
\]
of codimension \(p\), which meet every face
\[
X\times \Delta^{m}\hookrightarrow X\times \Delta^{n}
\]
properly. Here
\[
\Delta^{n}
=
\operatorname{Spec}
k[t_{0},\ldots,t_{n}]/(t_{0}+\cdots+t_{n}-1)
\]
is the algebraic \(n\)-simplex. The face maps induce a boundary operator
\[
\partial=\sum_{i=0}^{n}(-1)^{i}\partial_{i}
:
z^{p}(X,n)\longrightarrow z^{p}(X,n-1),
\]
making \(z^{p}(X,\bullet)\) into a chain complex.

The \emph{higher Chow groups} of \(X\) are defined by
\[
CH^{p}(X,n)
:=
H_{n}\bigl(z^{p}(X,\bullet)\bigr).
\]
\end{definition}

Let us recall the following result of Voevodsky, see \cite[Chapter 5, Proposition 4.2.9]{VSF}:

\begin{proposition}
    For a smooth projective variety \(X\) of dimension \(d\),
\[
CH^{d-i}(X,j-2i)
=
\operatorname{Hom}\bigl(\mathbb Z(i)[j],M(X)\bigr).
\]

\end{proposition}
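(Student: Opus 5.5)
Since this proposition is cited from Voevodsky's work, the plan is to reconstruct it from the motivic formalism recalled above rather than prove it from scratch. The idea is to identify both sides with the same Hom group in $DM_{gm}(k)$ and then reduce to Bloch's cycle complex via the comparison between motivic cohomology and higher Chow groups.

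\textbf{Step 1: duality.} Since $X$ is smooth projective of dimension $d$, the duality \eqref{EqPoincareDualityForMotivess} gives $M(X)\cong M(X)^\vee(d)[2d]$. By adjunction for the internal Hom in the rigid tensor category $DM_{gm}(k)$,
\[
\operatorname{Hom}\bigl(\mathbb Z(i)[j],M(X)\bigr)
\cong
\operatorname{Hom}\bigl(\mathbb Z(i)[j]\otimes M(X),\mathbb Z(d)[2d]\bigr)
\cong
\operatorname{Hom}\bigl(M(X),\mathbb Z(d-i)[2d-j]\bigr).
\]
The right-hand side is, by definition, the motivic cohomology group $H^{2d-j}_{\mathcal M}(X,\mathbb Z(d-i))$.

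\textbf{Step 2: comparison with Bloch's groups.} Next I invoke the comparison theorem $H^{p}_{\mathcal M}(X,\mathbb Z(q))\cong CH^{q}(X,2q-p)$ for smooth $X$. This uses the embedding theorem $DM^{eff}_{gm}\hookrightarrow DM_-$ and the cancellation theorem, so that Tate twists may be removed when computing Homs. With $q=d-i$ and $p=2d-j$, we get $2q-p=2d-2i-2d+j=j-2i$. This yields $CH^{d-i}(X,j-2i)$, as claimed.

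\textbf{Main obstacle.} The real content lies in Step 2. The comparison requires identifying the motivic complex $\mathbb Z(q)$ with Suslin–Friedlander's complex built from equidimensional cycles, and then applying Suslin's moving lemma, or the localization theorem for Bloch's complex, to compare with $z^q(X,\bullet)$. That step needs $k$ perfect and, integrally, resolution of singularities; otherwise one inverts the exponential characteristic. I would not reprove it but cite it as in \cite[Chapter 5]{VSF}.

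The remaining bookkeeping is easy. Negative $j-2i$ gives zero on both sides. The effective versus non-effective setting causes no trouble, because cancellation makes $DM^{eff}_{gm}\to DM_{gm}$ fully faithful.
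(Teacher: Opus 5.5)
Your argument is correct, but it takes a different route from the result the paper relies on. The paper gives no proof of its own. It quotes \cite[Chapter 5, Proposition 4.2.9]{VSF}, which is essentially a statement about motivic Borel--Moore homology. There, for an equidimensional quasi-projective $X$ of dimension $d$, one identifies $\operatorname{Hom}(\mathbb Z(i)[2i+n],M^c(X))$ with the $n$-th homology of the Suslin--Friedlander complex $z_{\mathrm{equi}}(X,i)(\Delta^\bullet)$ of equidimensional $i$-dimensional cycles. Suslin's comparison theorem then identifies this with Bloch's $CH_i(X,n)=CH^{d-i}(X,n)$. The version stated in the paper is the special case $M^c(X)=M(X)$ for projective $X$, and no duality enters (in \cite{VSF} the duality theorem comes afterwards).

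You instead use Poincar\'e duality to move $M(X)$ into the source, landing in $H^{2d-j}_{\mathcal M}(X,\mathbb Z(d-i))$. You then apply the cohomological comparison $H^p_{\mathcal M}(X,\mathbb Z(q))\cong CH^q(X,2q-p)$, and your index bookkeeping ($2q-p=j-2i$) is right. Both routes ultimately rest on the same Suslin-type comparison between equidimensional cycle complexes and Bloch's complex.

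What each route buys:
\begin{itemize}
\item The cited route is more general: it covers singular or non-proper $X$, with $M^c$ in place of $M$, and needs only one deep input.
\item Your route needs $X$ smooth and projective and two deep inputs (duality and the comparison). In exchange it uses only tools the paper has already recalled, together with the best-known form of ``motivic cohomology equals higher Chow groups''. It also covers negative $i$ directly.
\end{itemize}
Incidentally, for smooth $X$ the cohomological comparison is known in every characteristic without resolution of singularities. So your caveat about resolution attaches to the duality and $M^c$ formalism rather than to Step 2.

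There is one small point you skipped. The comparison in Step 2 concerns weights $q\ge 0$, so the range $i>d$, where $q=d-i<0$, needs a separate line. There both sides vanish. On the left, $CH^{d-i}(X,\cdot)=0$ because the codimension is negative. On the right, cancellation gives $\operatorname{Hom}(M(X),\mathbb Z(d-i)[2d-j])\cong\operatorname{Hom}(M(X)(r),\mathbb Z[2d-j])$ with $r=i-d>0$. This is zero: compare the projective bundle formula $M(X\times\mathbb P^r)\cong\bigoplus_{s=0}^{r}M(X)(s)[2s]$ with the fact that $H^p_{\mathrm{Nis}}(Y,\mathbb Z)$ of a smooth $Y$ is $\mathbb Z^{\pi_0(Y)}$ in degree $0$ and zero otherwise, so the $s=0$ summand already accounts for everything. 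Your remark that negative $j-2i$ gives zero on both sides is already contained in the comparison theorem rather than being extra bookkeeping.
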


Recall
\(
\dim Gr_m^n=mn.
\)
Starting from the motivic decomposition

\[
M(Gr_m^n)
=
M(Gr_{m-1}^n)
\oplus
M(Gr_m^{n-1})(m)[2m],
\]

and applying
\(\operatorname{Hom}(\mathbb Z(i)[j],-)\),
we obtain

\[
CH^{mn-i}\!\left(Gr_m^n,j-2i\right)
\cong
CH^{(m-1)n-i}\!\left(Gr_{m-1}^n,j-2i\right)
\oplus
CH^{mn-i}\!\left(Gr_m^{n-1},j-2i\right).
\]
Equivalently, writing \(p=mn-i\),
\[
CH^{p}\!\left(Gr_m^n,\,j-2mn+2p\right)
\cong
CH^{p-n}\!\left(Gr_{m-1}^n,\,j-2mn+2p\right)
\oplus
CH^{p}\!\left(Gr_m^{n-1},\,j-2mn+2p\right).
\]


Starting from the dual motivic decomposition

\[
M(Gr_m^n)
=
M(Gr_{m-1}^n)(n)[2n]
\oplus
M(Gr_m^{n-1}),
\]

we obtain

\[
CH^{mn-i}\!\left(Gr_m^n,j-2i\right)
\cong
CH^{mn-i}\!\left(Gr_{m-1}^n,j-2i\right)
\oplus
CH^{m(n-1)-i}\!\left(Gr_m^{n-1},j-2i\right).
\]

Equivalently, in terms of \(p=mn-i\),

\[
CH^{p}\!\left(Gr_m^n,\,j-2mn+2p\right)
\cong
CH^{p}\!\left(Gr_{m-1}^n,\,j-2mn+2p\right)
\oplus
CH^{p-m}\!\left(Gr_m^{n-1}, \,j-2mn+2p\right).
\]


\subsection{Generating Functions}

Define the generating function

\[
B_{m,n}(x,y)
:=
\sum_{p,r}
b_{m,n}(p,r)\,
x^{p}y^{r}.
\]
where
\(
b_{m,n}(p,r)
:=
\operatorname{rk}
CH^{p}\!\left(Gr_m^n,r\right)
\). From \(
b_{m,n}(p,r)
=
b_{m-1,n}(p-n,r)
+
b_{m,n-1}(p,r),
\) it follows that

\[
B_{m,n}(x,y)
=
x^{\,n}B_{m-1,n}(x,y)
+
B_{m,n-1}(x,y).
\]

Similarly from \(
b_{m,n}(p,r)
=
b_{m-1,n}(p,r)
+
b_{m,n-1}(p-m,r),
\) we obtain

\[
B_{m,n}(x,y)
=
B_{m-1,n}(x,y)
+
x^{\,m}B_{m,n-1}(x,y).
\]

Let us now specialize to ordinary Chow groups, 
the two recursions become

\[
B_{m,n}(x)
=
x^{\,n}B_{m-1,n}(x)
+
B_{m,n-1}(x),
\]

and

\[
B_{m,n}(x)
=
B_{m-1,n}(x)
+
x^{\,m}B_{m,n-1}(x).
\]
which are precisely the two dual \(q\)-Karajī--Pascal identities
\[
\binom{m+n}{m}_{x}
=
x^{\,n}
\binom{m+n-1}{m-1}_{x}
+
\binom{m+n-1}{m}_{x},
\]

and

\[
\binom{m+n}{m}_{x}
=
\binom{m+n-1}{m-1}_{x}
+
x^{\,m}
\binom{m+n-1}{m}_{x}.
\]

\subsection{Hard Lefschetz and Unimodality}

Let \(X\) be a smooth projective complex variety of dimension \(d\), and let
\[
b_k=\dim H^k(X,\mathbb Q)
\]
denote its Betti numbers.

\bigskip

The Hard Lefschetz theorem states that if \(L\in H^2(X,\mathbb Q)\) is the cohomology class of an ample divisor, then for every \(0\le k\le d\),
\[
L^{\,d-k} :
H^k(X,\mathbb Q)
\longrightarrow
H^{2d-k}(X,\mathbb Q)
\]
is an isomorphism.

A standard consequence is that for every \(k<d\), the map

\[
L :
H^k(X,\mathbb Q)
\longrightarrow
H^{k+2}(X,\mathbb Q)
\]

is injective. Indeed, if \(\alpha\in H^k(X,\mathbb Q)\) satisfies
\(L\alpha=0\), then \(
L^{\,d-k}\alpha
=
L^{\,d-k-1}(L\alpha)
=
0.
\) Since \(L^{\,d-k}\) is an isomorphism by Hard Lefschetz, it follows that
\(\alpha=0\). Therefore
\[
b_k
=
\dim H^k(X,\mathbb Q)
\le
\dim H^{k+2}(X,\mathbb Q)
=
b_{k+2}
\qquad (k<d).
\]

Thus the Betti numbers are weakly increasing up to the middle dimension:
\[
b_0 \le b_2 \le b_4 \le \cdots \le b_d,
\]
and similarly in odd degrees. On the other hand, Poincaré duality gives
\[
b_k=b_{2d-k}.
\]
Combining these two facts yields
\[
b_0 \le b_2 \le \cdots \le b_{2\lfloor d/2 \rfloor}
= b_{2\lceil d/2 \rceil}
 \ge \cdots \ge b_{2d}.
\]
Hence the sequence of Betti numbers is unimodal. For Grassmannians, all odd cohomology groups vanish, and one has
\[
\genfrac{[}{]}{0pt}{}{m+n}{m}_q
=
\sum_{k=0}^{mn} b_{2k} q^k.
\]
Consequently, the coefficients of the Gaussian binomial coefficient satisfy

\[
b_0 \le b_2 \le \cdots \le  b_{2\lfloor mn/2 \rfloor}
=
 b_{2\lceil mn/2 \rceil} \ge \cdots \ge b_{2mn},
\]

while Poincaré duality implies \(b_{2k}=b_{2(mn-k)}.\) Therefore the coefficients of \(
\genfrac{[}{]}{0pt}{}{m+n}{m}_q\) form a symmetric unimodal sequence.

Let us now consider the flag variety 
\[
Fl(m,m+1;m+n)=\{(V,W)\mid V\subset W,\ \dim V=m,\ \dim W=m+1\}.
\]
It is fibered in projective spaces over both \(\mathrm{Gr}_m^n\), with fiber \(\mathbb{P}^{n-1}\), and \(\mathrm{Gr}_{m+1}^n\), with fiber \(\mathbb{P}^m\). The projective bundle theorem therefore, see \cite[Chapter 5, Proposition~3.5.1]{VSF}, gives the isomorphism of motives
\[
M(\mathrm{Gr}_m^n)\otimes M(\mathbb{P}^{n-1})
\cong
M(\mathrm{Gr}_{m+1}^n)\otimes M(\mathbb{P}^m).
\]

\noindent
This is a geometric fact underlying the following obvious identity:
\[
\binom{m+n}{m}_q\,\frac{q^{n}-1}{q-1}
=
\binom{m+n}{m+1}_q\,\frac{q^{m+1}-1}{q-1}.
\]  
Applying the Euler characteristic (or sending \(q\to 1\)) is a ring homomorphism that sends \(M(\mathrm{Gr}_m^n)\) to \(\binom{m+n}{m}\) and \(M(\mathbb{P}^m)\) to \(m+1\). The motivic identity thus collapses to the numerical identity
\[
\binom{m+n}{m}(n)=\binom{m+n}{m+1}(m+1),
\]
equivalently,
\[
\frac{\binom{m+n}{m+1}}{\binom{m+n}{m}}=\frac{n}{m+1}.
\]
The ratio is strictly greater than \(1\) exactly when \(m < \frac{m+n-1}{2}\), equals \(1\) when \(m+n\) is odd and \(m=\frac{m+n-1}{2}\), and is strictly less than \(1\) when \(m > \frac{m+n-1}{2}\). This is a geometric fact behind the unimodality of the row
\[
\binom{m+n}{0},\binom{m+n}{1},\dots,\binom{m+n}{m+n}
\]
of the Karajī-Pascal triangle.

\bigskip

Here is a final remark:

\begin{remark} 

The guiding principle of motive theory is that many apparently different mathematical structures arise from a single hidden source. 
The Karajī--Pascal identity is only one example of a much broader phenomenon. Many classical identities in combinatorics can be interpreted as realizations of geometric decompositions or filtrations on motives associated with algebraic varieties. Many such identities, and recursive formulas frequently arise from:

\begin{itemize}
    \item[-] stratifications of varieties,
    \item[-] decompositions in triangulated categories,
    \item[-] mixed Hodge structures,
    \item[-] and motivic filtrations.
    
\end{itemize}

From this perspective, combinatorics becomes a shadow of geometry, while motives provide a conceptual framework explaining why many seemingly unrelated identities share the same underlying structure. 
\\
Following a comment by Prof. Kahn, we would like to draw the reader's attention to an interesting related observation on the Beta function and Jacobi sums due to N. Otsubo and T. Yamazaki; see \cite{Ot-Ya}.

\end{remark}

\section*{Acknowledgements} I would like to express my gratitude to Luca Barbieri-Viale and Bruno Kahn for their support and encouragement, especially during the difficult period when the circumstances arising from the war affecting our country created exceptional challenges. This work has been strongly influenced by Arash Rastegar's perspective on mathematics and mathematics education, for which I am deeply grateful. I would also like to thank Esmail Arasteh Rad for his comments and suggestions.

This note was extracted from my talks at a conference at IASBS Zanjan(Aug. 2023), and for that I would like to thank Meysam Nassiri and other organizers, and an additional talk at mathematics institute of TMU (Tehran)(May 2026), and for that I would like to thank the staff and audiences.

\begin{minipage}[t]{0.9\linewidth}
\noindent
\small\textbf{Somayeh Habibi}, School of Mathematics, Institute for Research in Fundamental Sc
iences (IPM), P.O. Box: 19395-5746, Tehran, Iran
 email: shabibi@ipm.ir

\end{minipage}

\end{document}